\newcommand{\QQ}{\mathbb{Q}}
\newcommand{\ZZ}{\mathbb{Z}}
\newcommand{\NN}{\mathbb{N}}
\newcommand{\KK}{\mathbb{K}}
\newcommand{\MM}{\mathbb{M}}
\DeclareMathOperator{\N}{N}
\newcommand\set[1]{\left\{#1\right\}}
\newcommand\br[1]{\left(#1\right)}
\newcommand\abs[1]{\left|#1\right|}
\newcommand\bigO[1]{\mathop{O}\br{#1}}
\newcommand\bigOm[1]{\mathop{\Omega}\br{#1}}
\newtheorem{thm_main}{Theorem}
\newtheorem{conjecture}{Conjecture}
\newtheorem{thm}{Theorem}[section]
\newtheorem{lem}[thm]{Lemma}
\begin{document}

\title{On a conjecture of Levesque and Waldschmidt II}

\author[T. Hilgart]{Tobias Hilgart}
\email{tobias.hilgart@plus.ac.at}

\author[V. Ziegler]{Volker Ziegler}
\email{volker.ziegler@plus.ac.at}

\subjclass[2020]{11D25, 11D57}

\keywords{Simplest cubic fields, family of Thue equations, diophantine
equations}

\begin{abstract}
    Related to Shank's notion of simplest cubic fields, the family of parametrised Diophantine equations,
    \[
        x^3 - (n-1) x^2 y - (n+2) xy^2 - 1 = \br{ x - \lambda_0 y}\br{x-\lambda_1 y}\br{x - \lambda_2 y} = \pm 1,
    \]
    was studied and solved effectively by Thomas and later solved completely by Mignotte. 
    
    An open conjecture of Levesque and Waldschmidt \cite{lewa15} states that taking these parametrised Diophantine equations and twisting them not only once but twice, in the sense that we look at
    \[
        f_{n,s,t}(x,y) = \br{ x - \lambda_0^s \lambda_1^t y }\br{ x - \lambda_1^s\lambda_2^t y }\br{ x - \lambda_2^s\lambda_0^t y } = \pm 1,
    \]
    retains a result similar to what Thomas obtianed in the original or Levesque and Waldschidt in the once-twisted ($t = 0$) case; namely, that non-trivial solutions can only appear in equations where the parameters are small. We confirm this conjecture, given that the absolute values of the exponents $s, t$ are not too large compared to the base parameter $n$.
\end{abstract}

\maketitle

%
%
\section{Motivation and Statement}
    A simplest cubic field, according to Shanks \cite{sha74}, is a cyclic (cubic) field, i.e. where the Galois action is a Moebius transform. They have nice arithmetic properties and, in particular, a fundamental system of units is explicitly known. Shanks constructed many such simplest cubic fields using the polynomials 
    \[
        f_n(x) = x^3-(n-1)x^2 -(n+2)x-1,
    \]
    whose discriminant is $(n^2+n+7)^2$, which in the case of primality of the inner term is also the discriminant of the associated number field. A fundamental system of untis for the number field $\KK_n = \QQ[X] / (f_n)$ is given by any pair of roots of $f_n$ (see \cite{sha74} or \cite{tho79}).

    Thomas \cite{tho90} considered a homogenised version of these polynomials,
    \[
        f_n(x,y) = f_n\br{\frac{x}{y}} y^3 = x^3 - (n-1) x^2y - (n+2) xy - y^3,
    \]
    and the equations $f_n(x, y) = \pm 1$, where $n \geq 0$. He effectively solved this infinite family of Thue equations, i.e. solved the equations where the parameter $n$ is at least $1.365 \times 10^7$, and was one of the first to obtain such a result; Mignotte \cite{mig93} later completed the work by also solving the cases where $ n < 1.365 \times 10^7$. A thorough investigation of such Thue equations and their correspondence to the simplest cubic fields, and in what sense the simplest cubic fields are unique, has been done by Hoschi~\cite{hosch11}, following unpublished work of Okazaki~\cite{oka} and building upon \cite{oka02}.

    Levesque and Waldschmidt \cite{lewa15} considered a twisted version of Thomas' equations. If $f_n(x, y)$ factorises to $(x - \lambda_0 \, y)(x - \lambda_1\, y)(x - \lambda_2 \, y)$, then they took each root to the $s$-th power and considered the equations
    \[
        f_{n, s}(x, y) = \br{ x - \lambda_0^s \, y } \br{ x - \lambda_1^s \, y } \br{ x - \lambda_2^s \, y } = \pm 1,
    \]
    now parametrised by integers $n \geq 0$ and $s \geq 1$. They effectively solved this infinite family of Thue equations and obtained a result similar to that of Thomas.

    Since the number fields $\KK_n$ involved in these equations are of unit rank $2$, and any two of the three roots $\lambda_0, \lambda_1, \lambda_2$ form a fundamental system of  units for $\KK_n$, Levesque and Waldschmidt conjectured \cite{lewa15}*{Conjecture~1.1} that a similar result to the one they obtained would hold if one considered the conjugates of $\lambda_0^s\lambda_1^t$ instead of $\lambda_0^s$; they formulated this in the following
    \begin{conjecture}\label{conj: lewa}
        There exists a positive absolute constant $\kappa$ with the following porperty: If $n, s, t, x, y, m$ are integers satisfying
        \[
            \max\set{\abs{x}, \abs{y}} \geq 2, \quad (s,t) \neq (0,0) \; \text{ and } \; 0 < \abs{f_{n,s,t}(x,y)} \leq m,
        \]
        then
        \[
            \max\set{ \log\abs{n}, \abs{s}, \abs{t}, \log\abs{x}, \log\abs{y} } \leq \kappa \br{ 1 + \log m }.
        \]
    \end{conjecture}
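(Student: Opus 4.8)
The plan is to treat the equation $f_{n,s,t}(x,y) = \N_{\KK_n/\QQ}\br{x - \eta_0 y} = \mu$, with $\eta_0 = \lambda_0^s\lambda_1^t$ and $0 < \abs{\mu} \le m$, as a family of Thue inequalities and to attack it by Siegel's approximation identity together with a lower bound for linear forms in logarithms. First I would record sharp two-sided estimates for the three roots $\lambda_0 \approx n$, $\lambda_1 \approx -1$, $\lambda_2 \approx -1/n$, and for $\log\abs{\lambda_i}$ and the separations $\abs{\lambda_i - \lambda_j}$ as functions of $n$ --- and crucially to all relevant orders, not merely the leading term, because these higher-order contributions are amplified once $\lambda_i$ is raised to the large powers $s,t$. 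Using $\lambda_0\lambda_1\lambda_2 = 1$ and the fact that $\set{\lambda_0,\lambda_1}$ is a fundamental system of units, I would then write the logarithmic embeddings of the twisted units as $\log\abs{\eta_0} = s\log\abs{\lambda_0} + t\log\abs{\lambda_1}$, $\log\abs{\eta_1} = s\log\abs{\lambda_1} + t\log\abs{\lambda_2}$ and $\log\abs{\eta_2} = s\log\abs{\lambda_2} + t\log\abs{\lambda_0}$, whose leading behaviour is $\br{s,-t,t-s}\log n$.

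Given a nontrivial solution, the next step is to single out the index $j$ minimising $\abs{\beta_j} = \abs{x - \eta_j y}$ and to show that this factor is exponentially small: from $\beta_0\beta_1\beta_2 = \mu$ and the fact that the two remaining factors are close to $\br{\eta_j - \eta_k}y$ one obtains
\[
    \abs{\frac{x}{y} - \eta_j} \le \frac{4m}{\abs{y}^3\,\abs{\eta_j - \eta_{j+1}}\,\abs{\eta_j - \eta_{j+2}}}.
\]
Feeding this into Siegel's identity
\[
    \br{\eta_1 - \eta_2}\beta_0 + \br{\eta_2 - \eta_0}\beta_1 + \br{\eta_0 - \eta_1}\beta_2 = 0
\]
yields a linear form $\Lambda$ whose absolute value is of the same exponentially small order as $\abs{\beta_j}/\abs{y}$. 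The decisive manipulation is to rewrite $\Lambda$, using $\lambda_0\lambda_1\lambda_2 = 1$ and the unit structure, as a linear form in a \emph{fixed} number of logarithms --- those of the fundamental units $\lambda_0,\lambda_1$ together with corrections of height $\bigO{\log n + \log m}$ coming from $\mu$ and from the differences $\eta_i - \eta_k$ --- in which $s$, $t$ and $\log\abs{y}$ appear only as integer coefficients. This keeps the number of logarithms bounded independently of $n,s,t$, so that only the coefficient height $B \approx \max\set{\abs{s},\abs{t},\log\abs{y}}$ grows.

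I would then apply a sharp lower bound for linear forms in logarithms (for instance Matveev's theorem) to $\Lambda$. Since the number of logarithms is fixed and their heights are governed by $\log n$ and $\log m$, the bound takes the shape $\log\abs{\Lambda} \ge -C\,\br{\log n}^2\br{\log n + \log m}\log B$ with $C$ absolute. Comparing this with the upper bound $\log\abs{\Lambda} \le -c\,\log\abs{y}\,\Delta + \bigO{\log m}$, where $\Delta$ measures the separation between the relevant values $\log\abs{\eta_i}$, first bounds $\log\abs{x}$ and $\log\abs{y}$ in terms of $\log B$ and the parameters. A second, independent inequality --- obtained either from a different pair of factors or from the gap principle that consecutive solutions grow doubly exponentially --- then feeds back to bound $\max\set{\abs{s},\abs{t}}$ and $\log\abs{n}$, so that the whole system of inequalities collapses to the desired bound on $\max\set{\log\abs{n},\abs{s},\abs{t},\log\abs{x},\log\abs{y}}$ in terms of $1 + \log m$.

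The hard part, and the reason the full conjecture is delicate, is the regime in which $\max\set{\abs{s},\abs{t}}$ is large compared with $n$. There the leading-order picture $\br{s,-t,t-s}\log n$ becomes unreliable: the correction $t\log\abs{\lambda_1} \approx -t/n$ is no longer negligible against $s\log n$ once $\abs{t} \gtrsim n\log n$, two of the twisted units can become nearly equal in absolute value along the lines $s+t = 0$, $t = 2s$ and $s = 2t$, and the separations $\abs{\eta_i - \eta_k}$ entering the approximation degrade. Making the comparison of the two bounds succeed \emph{uniformly} across this range --- so that the Matveev bound still beats the approximation bound even when $B$ is a large power of $n$ --- is the crux, and it is precisely here that the all-orders control of $\log\abs{\lambda_i}$ from the first step and a careful case analysis near the degeneracy lines are indispensable; I expect this to be where an unconditional argument meets its real resistance.
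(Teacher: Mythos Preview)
The statement you are addressing is a \emph{conjecture} in the paper, not a theorem: the paper does not prove it. What the paper does prove is the restricted Theorem~\ref{thm: main}, namely the case $m=1$ together with the extra hypothesis $\max\{|s|,|t|\}\le n^{1/2-\varepsilon}$. Your proposal is not a proof of the full conjecture either, and you effectively concede this in your last paragraph: once $\max\{|s|,|t|\}$ is large compared with $n$, the separations $|\eta_i-\eta_k|$ degenerate along the lines you name, and the Matveev lower bound $\log|\Lambda|\ge -C(\log n)^2(\log n+\log m)\log B$ no longer dominates the approximation upper bound. This is precisely the open part of the problem, not a routine detail. Equally problematic is the ``second, independent inequality'' you invoke to bound $|s|,|t|$ and $\log|n|$: neither switching to another pair of factors nor a gap principle for solutions produces such a bound in this setting, and without it the argument cannot collapse to $\kappa(1+\log m)$.

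For the range the paper does treat, its method is different from yours. Rather than Siegel's identity and a Baker-type lower bound, it writes $\beta^{(1)}=\pm\lambda_0^{b_1}\lambda_1^{b_2}$, solves the resulting $2\times2$ system by Cramer's rule, and forms an integer combination $\bar b=b_0-b_1-b_2$. The decisive input is Lemma~\ref{lem: vbar}, which shows $\bar v\ge(\log n)/n$ and $R-\bar v=\Omega(\log n)$; this forces $\bar b\ge 1$ and hence $\log|y|=\Omega(n\log n)$, which is then contradicted by the Bugeaud--Gy\H{o}ry upper bound (Theorem~\ref{thm:bugy}). Both of these steps use the hypothesis $\max\{|s|,|t|\}\le n^{1/2-\varepsilon}$ in an essential way --- to control the error terms in Lemma~\ref{lem: vbar} and to keep the height $H$ in Theorem~\ref{thm:bugy} small. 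Your linear-forms outline could be made to work in that restricted range as well, but there is no indication it reaches further than the paper does.

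A minor inaccuracy: with the paper's labeling one has $\lambda_1\approx -1/n$ and $\lambda_2\approx -1$ (Lemma~\ref{lem: lapprox}), so the leading logarithmic embeddings of $\eta_0,\eta_1,\eta_2$ are $(s-t,\,-s,\,t)\log n$, not $(s,\,-t,\,t-s)\log n$.
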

    
    For $m=1$ the conjecture recovers the direct generalisation ($t$ can be something other than $0$) of their generalisation ($s$ can be something other than $1$) of Thomas' original result. Another way of looking at these equations is to take any unit $\rho \in \ZZ_{\KK_n}^\times$ with conjugates $\rho_2$ and $\rho_3$ and the equations
    \[
        f_{n, \rho}(x,y) = \br{ x - \rho \, y }\br{ x - \rho_2 \, y }\br{ x - \rho_3 \, y } = \pm 1,
    \]
    and range over all integers $n$ and units $\rho \in \ZZ_{\KK_n}^\times$ instead of integers $s$ and $t$.

    The original result of Levesque and Waldschmidt \cite{lewa15}*{Theorem~1.1} proves Conjecture~\ref{conj: lewa} in the case that $t=0$ and $m=1$. For $m>1$ they could not give an upper bound for $\abs{n}$. The "$t=0$" case of Conjecture~\ref{conj: lewa} is already known in this sense, and therefore the focus of our work is to extend it. Our result is as follows:

    \begin{thm_main}\label{thm: main}
        Let $f_n(x) = x^3 - (n-1) x^2 - (n+2) x - 1$ and $\lambda_0, \lambda_1, \lambda_2$ be its roots in the number field $\KK_n = \QQ[X]/(f_n)$. Define the norm-form
        \[
            f_{n, s, t}(x, y) = \N_{\KK_n / \QQ}\br{ x - \lambda_0^s \lambda_1^t y } = \br{ x - \lambda_0^s \lambda_1^t y }\br{ x - \lambda_1^s\lambda_2^t y }\br{ x - \lambda_2^s\lambda_0^t y },
        \]
        and consider the parametrised family of Thue equations
        \[
            f_{n, s, t}(x, y) = \pm 1,
        \]
        where $n \in \NN$, $(s,t) \in \ZZ^2$ with $st \neq 0$ and $\max\set{\abs{s}, \abs{t}} \leq n^{1/2 - \varepsilon} $ for some fixed $0 < \varepsilon < \frac{1}{2}$. Then there exists an effectively computable constant $n_0 = n_0(\varepsilon)$, such that for any $n > n_0$ and $(s, t)$ as above the Thue equation $f_{n, s, t}(x,y) = \pm 1$ has no solutions where $\abs{y} \geq 2$.
    \end{thm_main}

    While Levesque and Waldschmidt \cite{lewa15}*{p.~540--541} were able to trace the case of negative parameters back to the case where $n\geq 0$ and $s\geq 1$ due to some symmetries involved , we have not been able to confirm any such symmetries in our equations and thus have to deal with both positive and negative exponents.

    For our proof, we were inspired by \cite{htz04}, which describes an algorithmic procedure for solving (polynomially, single) parametrised Thue equations. Of course, our starting point is different, since we have not only multiple parameters, but also parameters in the exponents. In Section~\ref{sec: lemmas}, we ensure that with a different starting situation and even some necessary conditions in the original paper that do not hold here, their idea is still a valid strategy and we can perform a proof similar to the one they describe.

    In the rest of the paper, we use the $O$ [$\Omega$] notation to describe asymptotic upper [lower] bounds for $n\to \infty$; the implied constants depend at most only on $\varepsilon$.

%
%
\section{Auxiliary Results}
\label{sec: lemmas}

    In our proof we derive a lower bound for $\log\abs{y}$, which we then compare with the following upper bound due to Bugeaud and Gy\H{o}ry \cite{bugy96}:
    \begin{thm}\label{thm:bugy}
        Let $F(X, Y) \in \ZZ[X, Y]$ be an irreducible binary form of degree $n \geq 3$ and let $b$ be a non-zero rational integer with absolute value at most $B (\geq e)$. Let $\MM = \QQ(\alpha)$ for some zero $\alpha$ of $F(X, 1)$, and denote by $R_\MM$ the regulator, and by $r = r_\MM$ the unit rank of $\MM$. Further, let $H (\geq 3)$ be an upper bound for the height of $F$. Then all solutions $x, y$ of the equation
        \[
            F(x, y) = b \qquad \text{ in } x, y \in \ZZ
        \]
        satisfy
        \[
            \max\set{\abs{x}, \abs{y}} < \exp\set{ c_3 R_\MM \max\set{\log R_\MM, 1} ( R_\MM + \log(HB) ) },
        \]
        where
        \[
            c_3 = c_3(n, r) = 3^{r+27} (r+1)^{7r+19} n^{2n+6r+14}.
        \]
    \end{thm}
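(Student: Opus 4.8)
The plan is to follow the classical effective approach to Thue equations via Baker's theory of linear forms in logarithms, making every estimate explicit in order to pin down the constant $c_3$. Write $F(X,Y) = a_0 \prod_{i=1}^n \br{X - \alpha^{(i)} Y}$ with $a_0$ the leading coefficient and $\alpha^{(1)}, \dots, \alpha^{(n)}$ the conjugates of $\alpha$, so that a solution $(x,y)$ gives $a_0 \prod_i \beta^{(i)} = b$ with $\beta^{(i)} := x - \alpha^{(i)} y$. Set $Y := \max\set{\abs{x}, \abs{y}}$; I may assume $Y$ is large, as otherwise the asserted bound is trivial. Since $\abs{a_0 \prod_i \beta^{(i)}} = \abs{b} \le B$ is bounded while each factor $\abs{\beta^{(j)}}$ is of order $\abs{y} \asymp Y$ whenever $\alpha^{(j)}$ stays away from the ratio $x/y$, there is exactly one index, say $i_0$, for which $\beta^{(i_0)}$ is small; concretely $\abs{\beta^{(i_0)}} \ll Y^{-(n-1)}$ up to factors depending on $H$ and $B$, whereas $\abs{\beta^{(j)}} \gg Y$ for every $j \ne i_0$.

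The next step is to pass to a unit equation. The element $a_0\beta = a_0 x - (a_0\alpha) y$ is an algebraic integer of $\MM$ with $\N_{\MM/\QQ}(a_0\beta) = a_0^{n-1} b$, hence it generates one of only finitely many, explicitly bounded, ideals; choosing generators of the corresponding coset representatives I write $a_0\beta = \mu \cdot \varepsilon$, where $\mu$ ranges over a finite set of elements of bounded height and $\varepsilon$ is a unit of $\MM$. Fixing a system of fundamental units $\eta_1, \dots, \eta_r$, I express $\varepsilon = \zeta\, \eta_1^{b_1} \cdots \eta_r^{b_r}$ with $\zeta$ a root of unity, and then bound $\max_j \abs{b_j}$ in terms of $\log Y$: taking the logarithmic embedding and inverting the matrix of unit logarithms — whose inverse is controlled by $R_\MM$ — yields $\max_j \abs{b_j} \le c\, \br{\log Y + \log(HB)}$ for an explicit $c$ depending on $n$ and $R_\MM$.

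The heart of the argument is Siegel's identity. Choosing three indices $i,j,k$ with, say, $k = i_0$, the relation
\[
    \br{\alpha^{(i)} - \alpha^{(k)}}\beta^{(j)} + \br{\alpha^{(k)} - \alpha^{(j)}}\beta^{(i)} + \br{\alpha^{(j)} - \alpha^{(i)}}\beta^{(k)} = 0
\]
rearranges, after substituting the unit factorisation of each $\beta$, into the statement that a linear form $\Lambda = b_1 \log \eta_1^{(\cdot)} + \cdots + b_r \log \eta_r^{(\cdot)} + \log(\text{algebraic})$ in logarithms of algebraic numbers is exponentially small, namely $\log\abs{\Lambda} \ll -\log Y$, since it measures the smallness of $\beta^{(i_0)}$. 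Against this I apply the Baker--W\"ustholz (or Matveev) lower bound for $\abs{\Lambda}$: the heights of the $\eta_j$ are bounded in terms of $R_\MM$, so the lower bound reads $\log\abs{\Lambda} \gg - C(n,r)\, R_\MM \max\set{\log R_\MM, 1}\, \log\br{\max_j \abs{b_j}}$. Combining the two estimates for $\log\abs{\Lambda}$ and inserting $\max_j\abs{b_j} \ll \log Y + \log(HB)$ gives an inequality of the shape $\log Y \le C' R_\MM \max\set{\log R_\MM, 1}\br{\log\log Y + \log(HB) + R_\MM}$, which I solve for $Y$, absorbing the $\log\log Y$ term to reach the stated form $\br{R_\MM + \log(HB)}$.

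The main obstacle is not the structure of the argument — that is entirely standard — but the explicit bookkeeping required to reach the precise constant $c_3 = 3^{r+27}(r+1)^{7r+19} n^{2n+6r+14}$. Every estimate must be made uniform and explicit: the cardinality and height of the finite set of $\mu$'s, the dependence of $\max_j\abs{b_j}$ on $R_\MM$ through the inverse of the unit-log matrix, the height bounds for the fundamental units in terms of $R_\MM$ via Minkowski-type estimates, and above all the exact constant in the Baker--W\"ustholz inequality together with its dependence on the degree $n$ and the rank $r$. Solving the final transcendental inequality for $\log Y$ and collecting all contributions into a single clean power of $3$, of $(r+1)$ and of $n$ is where the bulk of the careful work lies.
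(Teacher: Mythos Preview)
Your outline is a faithful sketch of the classical Baker--Bugeaud--Gy\H{o}ry method and nothing in it is wrong as a strategy. However, you should be aware that the paper itself does not prove this theorem at all: it is quoted verbatim as an auxiliary result from Bugeaud and Gy\H{o}ry \cite{bugy96}, with no proof or even proof sketch given. The role of the statement in the paper is purely as a black-box upper bound for $\log\abs{y}$, to be played off against the lower bound $\log\abs{y} = \bigOm{n\log n}$ derived in Section~3. So there is no ``paper's own proof'' to compare your proposal against; if your assignment was to supply what the paper omits, your plan is the right one, and the only real work --- as you correctly identify --- is the explicit bookkeeping needed to land on the exact constant $c_3 = 3^{r+27}(r+1)^{7r+19} n^{2n+6r+14}$, for which you would need to go back to the original Bugeaud--Gy\H{o}ry paper rather than this one.
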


    The roots $\lambda_0, \lambda_1, \lambda_2$ of the polynomial $f_n$ also play a fundamental role in our proof. Therefore, and for the sake of simplicity, we summarise some known properties in the following lemmas.

    \begin{lem}\label{lem: lshuffle}
        Let $\lambda_0$ be a root of the polynomial $f_n(x) = x^3 - (n-1) x^2 - (n+2) x - 1$, then
        \[
            \lambda_1 = - \cfrac{1}{\lambda_0 + 1}, \;\;\; \lambda_2 = - \cfrac{\lambda_0 + 1}{\lambda_0}
        \]
        are the other two roots.
    \end{lem}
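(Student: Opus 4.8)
The plan is to verify directly that the proposed $\lambda_1$ and $\lambda_2$ are roots of $f_n$, and then check that they are distinct from $\lambda_0$ and from each other, so that together with $\lambda_0$ they account for all three roots. Concretely, I would substitute $x = -\frac{1}{\lambda_0+1}$ into $f_n(x)$, clear denominators by multiplying through by $(\lambda_0+1)^3$, and show the resulting polynomial expression in $\lambda_0$ vanishes modulo $f_n(\lambda_0) = 0$. A cleaner route is to work with the Galois-theoretic structure: since $\KK_n$ is cyclic cubic, its Galois group is generated by a Möbius transformation, and the map $\sigma\colon \lambda \mapsto -\frac{1}{\lambda+1}$ is exactly the order-three Möbius transformation $\begin{pmatrix} 0 & -1 \\ 1 & 1\end{pmatrix}$ (note its cube is the identity in $\mathrm{PGL}_2$). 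Hence $\sigma$ permutes the roots cyclically, $\lambda_1 = \sigma(\lambda_0)$ and $\lambda_2 = \sigma(\lambda_1) = \sigma^2(\lambda_0)$, and a short computation gives $\sigma^2(\lambda_0) = -\frac{\lambda_0+1}{\lambda_0}$, matching the claimed $\lambda_2$.

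If one prefers to avoid invoking the cyclic structure (which, although classical, is stated only informally in the introduction), the self-contained computation runs as follows. From $f_n(\lambda_0) = 0$ one reads off the elementary symmetric functions: $\lambda_0 + \lambda_1 + \lambda_2 = n-1$, $\lambda_0\lambda_1 + \lambda_1\lambda_2 + \lambda_2\lambda_0 = -(n+2)$, and $\lambda_0\lambda_1\lambda_2 = 1$. I would then show the pair $\left(-\frac{1}{\lambda_0+1}, -\frac{\lambda_0+1}{\lambda_0}\right)$ satisfies these same three relations together with $\lambda_0$. The product relation is immediate: $\lambda_0 \cdot \left(-\frac{1}{\lambda_0+1}\right)\cdot\left(-\frac{\lambda_0+1}{\lambda_0}\right) = 1$. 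For the sum, $\lambda_0 - \frac{1}{\lambda_0+1} - \frac{\lambda_0+1}{\lambda_0} = n-1$ becomes, after multiplying by $\lambda_0(\lambda_0+1)$, a cubic identity in $\lambda_0$ that reduces to $f_n(\lambda_0)=0$; similarly for the second symmetric function. Since $\lambda_0$ together with two quantities satisfies the full system of Newton/Vieta relations for $f_n$, and $f_n$ is separable (its discriminant $(n^2+n+7)^2$ is nonzero), those two quantities must be precisely $\lambda_1$ and $\lambda_2$.

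The only genuine subtlety is the degenerate-case bookkeeping: one must confirm $\lambda_0 \neq 0$ and $\lambda_0 \neq -1$ so that the expressions are well-defined, and that $\lambda_1 \neq \lambda_0$, $\lambda_2 \neq \lambda_0$, $\lambda_1\neq\lambda_2$. Both $\lambda_0 = 0$ and $\lambda_0 = -1$ are ruled out since $f_n(0) = -1 \neq 0$ and $f_n(-1) = -n-2 \neq 0$ for $n \in \NN$; distinctness of the three roots follows from separability of $f_n$. I do not expect any real obstacle here — this is a routine verification lemma — so the "hard part" is merely choosing the presentation; I would go with the symmetric-function computation as it makes the labelling of which root is $\lambda_1$ versus $\lambda_2$ transparent and needs nothing beyond $f_n(\lambda_0) = 0$.
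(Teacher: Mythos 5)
Your proposal is correct and takes essentially the same route as the paper, whose proof likewise verifies $f_n\bigl(-\tfrac{1}{\lambda_0+1}\bigr)=0$ by a direct computation using $\lambda_0^3=(n-1)\lambda_0^2+(n+2)\lambda_0+1$ and then reads off $\lambda_2$ from $\lambda_0\lambda_1\lambda_2=1$; your Vieta/symmetric-function variant is the same computation in slightly different packaging. One minor slip: $f_n(-1)=1$, not $-n-2$, though the conclusion $\lambda_0\neq-1$ (and hence the whole argument) is unaffected.
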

    \begin{proof}
        Since $\lambda_0 \lambda_1 \lambda_2 = 1$, it suffices to prove that $f_n\br{-\frac{1}{\lambda_0 + 1}} = 0$, which is a straightforward calculation, using $\lambda_0^3 = (n-1) \lambda_0^2 + (n+2)\lambda_0 + 1$.
    \end{proof}

    \begin{lem}\label{lem: lapprox}
        Let $\lambda_0, \lambda_1, \lambda_2$ be the roots of the polynomial $x^3 - (n-1) x^2 - (n+2) x - 1$, then
        \begin{alignat*}{11}
            \lambda_0 &= &&n &&+ \frac{2}{n} &&+ \bigO{n^{-2}}, \;\;\;  &&\log\abs{\lambda_0} &&= &&\phantom{+}\log n && && &&+ \frac{2}{n^2} &&+ \bigO{n^{-3}} \\
            \lambda_1 &= -&&\frac{1}{n} &&+ \frac{1}{n^2} &&+ \bigO{n^{-3}}, \;\;\; &&\log\abs{\lambda_1} &&= &&-\log n &&-&&\frac{1}{n} &&-\frac{3}{2n^2} &&+ \bigO{n^{-3}} \\
            \lambda_2 &= -&&1 &&- \frac{1}{n} &&+ \bigO{n^{-3}}, \;\;\; &&\log\abs{\lambda_2} &&= && &&\phantom{+}&&\frac{1}{n} &&- \frac{1}{2n^2} &&+ \bigO{n^{-3}}.
        \end{alignat*}
    \end{lem}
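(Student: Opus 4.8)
The plan is to establish the expansion of the largest root $\lambda_0$ first and to read off everything else from the algebraic relations of Lemma~\ref{lem: lshuffle}. A quick preliminary is to locate the roots. The discriminant $(n^2+n+7)^2$ is a positive square, so $f_n$ has three distinct real roots; evaluating $f_n$ at test points, for instance $f_n(n) = -2n-1 < 0$ and $f_n(n+1) = n^2+n-1 > 0$, while $f_n(-1) = 1 > 0$ and $f_n(0) = -1 < 0$, together with Vieta's relations $\lambda_0\lambda_1\lambda_2 = 1$ and $\lambda_0 + \lambda_1 + \lambda_2 = n-1$, pins them down as $\lambda_2 < -1 < \lambda_1 < 0 < n < \lambda_0 < n+1$. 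In particular $\lambda_0$ is the unique root near $n$, so $\abs{\lambda_0} = \lambda_0$, $\abs{\lambda_1} = -\lambda_1$ and $\abs{\lambda_2} = -\lambda_2$.

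For $\lambda_0$ I would substitute $\lambda_0 = n + w$ into $f_n$ and collect powers of $n$; the $n^3$ and $n^2$ contributions cancel, leaving
\[
    n^2 w + n w\br{1 + 2w} - 2n + w^3 + w^2 - 2w - 1 = 0 .
\]
Since $0 < w < 1$ by the location above, isolating $n^2 w$ and bounding the remaining terms crudely gives $w = \bigO{n^{-1}}$ (indeed $w = \bigOm{n^{-1}}$); feeding this back into $n^2 w = 2n - nw\br{1+2w} + \bigO{1}$ then yields $w = \frac{2}{n} + \bigO{n^{-2}}$, i.e. $\lambda_0 = n + \frac{2}{n} + \bigO{n^{-2}}$. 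Writing $\log\abs{\lambda_0} = \log n + \log\br{1 + \frac{2}{n^2} + \bigO{n^{-3}}}$ and expanding the logarithm produces the first line of the claim.

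With $\lambda_0$ in hand, Lemma~\ref{lem: lshuffle} gives $\lambda_1 = -\br{\lambda_0+1}^{-1}$ and $\lambda_2 = -1 - \lambda_0^{-1}$ as exact identities, so the remaining expansions follow by expanding rational functions and logarithms of $\lambda_0 + 1 = n\br{1 + \frac1n + \frac{2}{n^2} + \bigO{n^{-3}}}$ as geometric and $\log(1+u)$ series. This gives $\lambda_1 = -\frac1n + \frac{1}{n^2} + \bigO{n^{-3}}$ and $\log\abs{\lambda_1} = -\log n - \frac1n - \frac{3}{2n^2} + \bigO{n^{-3}}$, while $\lambda_0^{-1} = \frac1n + \bigO{n^{-3}}$ yields $\lambda_2 = -1 - \frac1n + \bigO{n^{-3}}$ and $\log\abs{\lambda_2} = \frac1n - \frac1{2n^2} + \bigO{n^{-3}}$. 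As a consistency check, the three logarithms sum to $\bigO{n^{-3}}$, matching $\log\abs{\lambda_0\lambda_1\lambda_2} = 0$.

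The one genuinely delicate point is obtaining the coefficient $2$ of the $\frac1n$ term in $\lambda_0$ together with a clean $\bigO{n^{-2}}$ error, since this single coefficient is exactly what produces the $\frac{2}{n^2}$, $-\frac{3}{2n^2}$ and $-\frac1{2n^2}$ terms in the three logarithms; everything downstream is then a mechanical series expansion. I would make the bootstrap rigorous by fixing an explicit sub-interval for $w$ from the sign evaluations above and iterating the fixed-point relation $w = n^{-2}\br{2n - nw\br{1+2w} + \bigO{1}}$ twice, which suffices to control the error to the required order.
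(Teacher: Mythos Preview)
Your argument is correct. The bootstrap for $\lambda_0$ is sound once you note $0<w<1$, and deriving $\lambda_1,\lambda_2$ from Lemma~\ref{lem: lshuffle} is a clean way to avoid repeating the analysis; the series computations you indicate all check out, and the consistency check $\sum\log\abs{\lambda_i}=0$ is a nice touch.

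The paper takes a slightly different, more direct route: rather than bootstrapping, it simply evaluates $f_n$ at $n+\tfrac{2}{n}\pm\tfrac{3}{n^2}$, observes the sign change, and concludes $\lambda_0 = n+\tfrac{2}{n}+\bigO{n^{-2}}$ by the intermediate value theorem; it then says the other two roots are handled ``analogously'' (i.e.\ by the same trick at the appropriate test points), without invoking Lemma~\ref{lem: lshuffle}. Your approach is a little more work up front but is constructive---it explains where the coefficient $2$ comes from rather than verifying a guessed value---and reusing the Galois relations means you only have to locate one root precisely. The paper's approach is shorter if one already knows the target expansion, and treats the three roots symmetrically. Either is perfectly adequate for this lemma.
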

    \begin{proof}
        Each asymptotic can be easily verified, e.g. $f\br{ n + \frac{2}{n} \pm \frac{3}{n^2} }$ is positive and negative, respectively and thus $\lambda_0 = n + \frac{2}{n} + \bigO{n^{-2}}$ by the intermediate value theorem. Taking the logarithm gives $\log\lambda_0 = \log n + \log\br{1 + \frac{2}{n^2} + \bigO{n^{-3}} }$, and calculating the first term in the Taylor-expansion proves the assertion for $\log\lambda_0$. The results for $\lambda_1$ and $\lambda_2$ follow analogously.
    \end{proof}

    We know (\cite{sha74} or \cite{tho79}) that every pair out of $\lambda_0, \lambda_1, \lambda_2$ gives a fundamental system of units for the number field $\KK = \QQ(\lambda_0)$. Thus the regulator $R$ of $\KK$ can be calculated by the the determinant
    \[
        R =
        \begin{vmatrix}
            \log\abs{\lambda_1} & \log\abs{\lambda_2} \\
            \log\abs{\lambda_2} & \log\abs{\lambda_0}
        \end{vmatrix};
    \]
    by the previous lemma, we have the following asymptotic for it:
    \begin{lem}\label{lem: regulatorapprox}
        \[
            R = (\log n)^2 + \frac{\log n}{n} + \bigO{\frac{\log n}{n^2}}.
        \]
    \end{lem}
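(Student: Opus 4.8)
The plan is to compute the regulator $R$ directly from the determinant formula
\[
    R = \log\abs{\lambda_0}\log\abs{\lambda_1} - \br{\log\abs{\lambda_2}}^2,
\]
using the asymptotic expansions for $\log\abs{\lambda_0}$, $\log\abs{\lambda_1}$ and $\log\abs{\lambda_2}$ provided by Lemma~\ref{lem: lapprox}. This is essentially a bookkeeping exercise in multiplying out truncated series, so the only real issue is deciding how many terms of each expansion must be retained to get the claimed error term $\bigO{(\log n)/n^2}$, and making sure no lower-order term has been dropped prematurely.

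First I would substitute the expansions. Writing $L = \log n$ for brevity, we have
\[
    \log\abs{\lambda_0} = L + \frac{2}{n^2} + \bigO{n^{-3}}, \qquad
    \log\abs{\lambda_1} = -L - \frac{1}{n} - \frac{3}{2n^2} + \bigO{n^{-3}}, \qquad
    \log\abs{\lambda_2} = \frac{1}{n} - \frac{1}{2n^2} + \bigO{n^{-3}}.
\]
Then $\log\abs{\lambda_0}\log\abs{\lambda_1} = -L^2 - \frac{L}{n} - \frac{3L}{2n^2} - \frac{2L}{n^2} + \bigO{L n^{-3}} = -L^2 - \frac{L}{n} + \bigO{L n^{-2}}$; note that the cross terms of order $n^{-1}\cdot L$ are the ones that survive into the $L/n$ coefficient, while everything of order $n^{-2}L$ or smaller (including the $L\cdot n^{-2}$ and $n^{-1}\cdot n^{-1}$ contributions) is absorbed into the error. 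Meanwhile $\br{\log\abs{\lambda_2}}^2 = \frac{1}{n^2} + \bigO{n^{-3}} = \bigO{n^{-2}}$, which is also absorbed into the $\bigO{L n^{-2}}$ term since $1 = \bigO{L}$. Subtracting gives $R = -\br{-L^2 - \frac{L}{n}} + \bigO{L n^{-2}} = L^2 + \frac{L}{n} + \bigO{\frac{L}{n^2}}$, which is exactly the claimed asymptotic.

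The main obstacle — if one can call it that — is purely organisational: one must be careful that the $\bigO{n^{-3}}$ errors in Lemma~\ref{lem: lapprox}, when multiplied by the $\bigO{\log n}$ leading terms, produce $\bigO{n^{-3}\log n}$, which is safely inside $\bigO{n^{-2}\log n}$, and that no term of size exactly $n^{-1}\log n$ is accidentally discarded or double-counted. Since $\log n \to \infty$, one should also remember that pure powers of $n$ without a logarithmic factor, such as the $1/n$ from $\log\abs{\lambda_0}\log\abs{\lambda_1}$ or the $1/n^2$ from $\br{\log\abs{\lambda_2}}^2$, are dominated by the corresponding $n^{-1}\log n$ and $n^{-2}\log n$ quantities and hence cause no trouble. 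No deeper ideas are needed.
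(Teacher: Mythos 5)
Your computation is correct and is essentially the paper's own (implicit) argument: the paper gives no separate proof, stating only that the asymptotic follows from Lemma~\ref{lem: lapprox} by expanding the determinant, which is exactly what you do, and your bookkeeping of which cross terms survive at order $\frac{\log n}{n}$ and which are absorbed into $\bigO{\frac{\log n}{n^2}}$ is accurate. The one cosmetic point is the sign: the determinant as written equals $\log\abs{\lambda_0}\log\abs{\lambda_1} - \br{\log\abs{\lambda_2}}^2 \approx -(\log n)^2$, so the stated $R$ is really its absolute value; your final sign flip should be justified by this convention, which the paper itself only acknowledges later with the remark ``up to sign''.
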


    We call a solution $(x,y)$ of the equation $f_{n,s,t}(x,y)$ of type $j$, if for $\alpha^{(1)} = \lambda_0^s\lambda_1^t, \alpha^{(2)} = \lambda_1^s\lambda_2^t, \alpha^{(3)} = \lambda_2^s\lambda_0^t$
    \[
        \abs{x - \alpha^{(j)} y} = \min\set{ \abs{x - \alpha^{(i)} y} : 1\leq i \leq 3 }.
    \]

    The following result allows us to consider only solutions of type 1.

    \begin{lem}\label{lem: onlytype1}
        Let $(x,y)$ be a solution of type $2$ or $3$ of $f_{n, s, t}(x,y) = \pm 1$, then it is a solution of type $1$ of $f_{n, -s+t,-s}(x, y) = \pm 1$ or $f_{n, -t, s-t}(x, y) = \pm 1$, respectively.
    \end{lem}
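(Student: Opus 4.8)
The plan is to exhibit the claimed correspondence explicitly — this is a purely formal statement, with no Diophantine input. The only tool is the relation $\lambda_0\lambda_1\lambda_2=1$ of Lemma~\ref{lem: lshuffle}, i.e. $\lambda_2=\br{\lambda_0\lambda_1}^{-1}$. Feeding it into the two conjugates $\alpha^{(2)}=\lambda_1^s\lambda_2^t$ and $\alpha^{(3)}=\lambda_2^s\lambda_0^t$ of $\alpha^{(1)}=\lambda_0^s\lambda_1^t$ to eliminate $\lambda_2$ gives
\[
    \alpha^{(2)}=\lambda_0^{-t}\lambda_1^{\,s-t},
    \qquad
    \alpha^{(3)}=\lambda_0^{\,-s+t}\lambda_1^{-s},
\]
so that $\alpha^{(2)}$ is exactly the first of the three conjugates occurring in $f_{n,-t,s-t}$, and $\alpha^{(3)}$ the first of the three occurring in $f_{n,-s+t,-s}$.

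Next I would check that re-parametrising does not change the binary form at all. By definition $f_{n,s,t}(x,y)=\prod_{j=1}^{3}\br{x-\alpha^{(j)}y}$, and $\set{\alpha^{(1)},\alpha^{(2)},\alpha^{(3)}}$ is the full set of $\QQ$-conjugates of $\alpha^{(1)}$ — the field $\KK_n$ is cyclic cubic and so has no quadratic subfield, whence $\alpha^{(1)}\notin\QQ$ (which holds whenever $(s,t)\neq(0,0)$, by multiplicative independence of $\lambda_0,\lambda_1$) already forces three distinct conjugates. The conjugate set of $\alpha^{(2)}$, and that of $\alpha^{(3)}$, is of course this same set; since the form associated with a parameter pair is just the product of $\br{x-\rho y}$ over its conjugate set, we obtain the polynomial identities $f_{n,s,t}=f_{n,-t,s-t}=f_{n,-s+t,-s}$ in $\ZZ[x,y]$ (the $\alpha$'s are products of units, so the coefficients are integers). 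In particular these three Thue equations $=\pm1$ are literally one and the same equation, and therefore have exactly the same integer solutions, with the same value and sign.

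It then only remains to track which conjugate realises $\min\set{\abs{x-\alpha^{(i)}y}:1\leq i\leq 3}$. For a solution $(x,y)$ of type $2$ of $f_{n,s,t}(x,y)=\pm1$ the minimum is attained at $\alpha^{(2)}$, which by the first display is the first conjugate of $f_{n,-t,s-t}$; hence $(x,y)$ is a solution of type $1$ of $f_{n,-t,s-t}(x,y)=\pm1$, and the type-$3$ case is identical with $f_{n,-s+t,-s}$ in place of $f_{n,-t,s-t}$. I do not expect a genuine obstacle here — this is a bookkeeping lemma whose only role is to let the later argument assume without loss of generality that the solution is of type $1$. The single point that needs care is the sign bookkeeping: getting the two parameter pairs $(-t,s-t)$ and $(-s+t,-s)$, and their pairing with types $2$ and $3$, the right way round, and checking that $\lambda_2=\br{\lambda_0\lambda_1}^{-1}$ is substituted into all three linear factors — not only the first — when claiming the polynomial identities.
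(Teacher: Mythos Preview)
Your proof is correct and takes essentially the paper's route: eliminate $\lambda_2$ via $\lambda_0\lambda_1\lambda_2=1$ to obtain $\alpha^{(2)}=\lambda_0^{-t}\lambda_1^{s-t}$ and $\alpha^{(3)}=\lambda_0^{-s+t}\lambda_1^{-s}$, then read off the new parameter pairs. The paper packages this as the cyclic action of $\phi:(s,t)\mapsto(-s+t,-s)$ on the triple $\br{\alpha^{(1)},\alpha^{(2)},\alpha^{(3)}}$, which simultaneously delivers the polynomial identity you obtain via the Galois-conjugacy argument. One remark on the bookkeeping you flagged: your pairing (type~$2$ with $(-t,s-t)$, type~$3$ with $(-s+t,-s)$) is the one the computation actually gives and is consistent with the paper's own proof; the ``respectively'' in the lemma's statement has the two parameter pairs interchanged, but since only the reduction to type~$1$ is ever used downstream, this is immaterial.
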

    \begin{proof}
        We use Lemma~\ref{lem: lshuffle} to express each $\alpha^{(i)}$ as powers of $\lambda_0, \lambda_1$, i.e
        \begin{align*}
            \alpha^{(1)} = \lambda_0^s\lambda_1^t, \quad
            \alpha^{(2)} = \lambda_0^{-t} \lambda_1^{s-t}, \quad
            \alpha^{(3)} = \lambda_0^{-s+t} \lambda_1^{-s},
        \end{align*}
        then the bijective linear transformation $\phi: (s, t) \mapsto (-s+t, -s)$ indeed maps 
        \[
            \alpha^{(1)} \mapsto \alpha^{(3)} \mapsto \alpha^{(2)} \mapsto \alpha^{(1)}.
        \]
    \end{proof}

    \begin{lem}\label{lem: lpowers}
        Let $a \in \ZZ$ with $\abs{a} = \bigO{n^{1/2 - \varepsilon}}$, then
        \begin{alignat*}{4}
            \lambda_0^a &= n^a &&+ 2a n^{a-2} &&+ \bigO{ n^{a-2-2\varepsilon} } \\
            (-1)^a\lambda_1^a &= n^{-a} &&- a n^{-a-1} &&+ \bigO{ n^{-a-1-2\varepsilon} } \\
            (-1)^a\lambda_2^a &= 1 &&+ a n^{-1} &&+ \bigO{ n^{-1-2\varepsilon} }.                
        \end{alignat*}
    \end{lem}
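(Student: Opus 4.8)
The plan is to reduce each of the three asymptotics to the expansions already recorded in Lemma~\ref{lem: lapprox} by writing $\lambda_i^a = \exp(a\log\lambda_i)$ (or, where signs intervene, $|\lambda_i|^a = \exp(a\log|\lambda_i|)$) and controlling the error terms that arise from exponentiating. For $\lambda_0$, Lemma~\ref{lem: lapprox} gives $\log|\lambda_0| = \log n + \tfrac{2}{n^2} + \bigO{n^{-3}}$, hence $a\log|\lambda_0| = a\log n + \tfrac{2a}{n^2} + \bigO{a n^{-3}}$. Since $|a| = \bigO{n^{1/2-\varepsilon}}$, the quantity $\tfrac{2a}{n^2} + \bigO{an^{-3}}$ is $\bigO{n^{-3/2-\varepsilon}}$, in particular it tends to $0$, so I may expand $\exp$: $\lambda_0^a = n^a \exp\!\big(\tfrac{2a}{n^2} + \bigO{an^{-3}}\big) = n^a\big(1 + \tfrac{2a}{n^2} + \bigO{a^2 n^{-4}} + \bigO{a n^{-3}}\big)$. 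Multiplying out gives $n^a + 2a n^{a-2} + \bigO{a^2 n^{a-4}} + \bigO{a n^{a-3}}$, and since $|a| = \bigO{n^{1/2-\varepsilon}}$ both tail terms are $\bigO{n^{a-2-2\varepsilon}}$, which is the claim. (One should note that $\lambda_0 > 0$ for $n$ large by Lemma~\ref{lem: lapprox}, so no sign factor is needed here.)

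For $\lambda_1$ and $\lambda_2$ the argument is identical once the sign is stripped off. By Lemma~\ref{lem: lapprox}, for $n$ large $\lambda_1$ and $\lambda_2$ are negative, so $(-1)^a\lambda_i^a = |\lambda_i|^a = \exp(a\log|\lambda_i|)$. For $\lambda_1$: $a\log|\lambda_1| = -a\log n - \tfrac{a}{n} - \tfrac{3a}{2n^2} + \bigO{an^{-3}}$, and here the dominant non-logarithmic term $-\tfrac{a}{n}$ is $\bigO{n^{-1/2-\varepsilon}}$, still $o(1)$, so $\exp$ may be expanded: $|\lambda_1|^a = n^{-a}\big(1 - \tfrac{a}{n} + \bigO{a^2 n^{-2}}\big) = n^{-a} - a n^{-a-1} + \bigO{a^2 n^{-a-2}}$, and $\bigO{a^2 n^{-a-2}} = \bigO{n^{-a-1-2\varepsilon}}$ since $a^2 = \bigO{n^{1-2\varepsilon}}$. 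The second-order term $-\tfrac{3a}{2n^2}$ contributes only $\bigO{n^{-a-3/2-\varepsilon}}$, which is absorbed. For $\lambda_2$: $a\log|\lambda_2| = \tfrac{a}{n} - \tfrac{a}{2n^2} + \bigO{an^{-3}}$, so $|\lambda_2|^a = 1 + \tfrac{a}{n} + \bigO{a^2 n^{-2}} = 1 + a n^{-1} + \bigO{n^{-1-2\varepsilon}}$, again using $a^2 = \bigO{n^{1-2\varepsilon}}$.

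The only point that requires any care — and the closest thing to an obstacle — is bookkeeping the error terms: one must check that every term dropped from the Taylor expansion of $\exp$, both the higher-order terms in its argument (the $\tfrac12 (a\log|\lambda_i|_{\mathrm{tail}})^2$ contribution) and the higher-order terms inherited from Lemma~\ref{lem: lapprox}, is genuinely $\bigO{n^{-2\varepsilon}}$ smaller than the displayed secondary term after multiplication by the leading power $n^{\pm a}$. This works precisely because the hypothesis $|a| = \bigO{n^{1/2-\varepsilon}}$ forces $a^2 = \bigO{n^{1-2\varepsilon}}$, which is exactly the slack needed: the square of the first correction is one factor of $n^{-2\varepsilon}$ down from the first correction itself. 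No deeper idea is involved; it is a clean application of Lemma~\ref{lem: lapprox} together with $e^u = 1 + u + \bigO{u^2}$ valid for $u = o(1)$.
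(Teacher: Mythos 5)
Your proof is correct and follows essentially the same route as the paper, which derives the lemma from Lemma~\ref{lem: lapprox} by Taylor expansion; your explicit bookkeeping that $a^2=\bigO{n^{1-2\varepsilon}}$ and $\abs{a}n^{-1}=\bigO{n^{-1/2-\varepsilon}}$ absorb all discarded terms into the stated error is exactly the point the paper leaves implicit.
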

    \begin{proof}
        The result follows from Lemma~\ref{lem: lapprox} and Taylor-expansion.
    \end{proof}

    \begin{lem}\label{lem: logdiffapprox}
        We have, up to an error of order $\bigO{n^{-1-2\varepsilon}}$, that $\log\abs{\alpha^{(1)} - \alpha^{(2)}}$ is either
        \[
            \left\{
            \begin{aligned}
                (s-t)\cdot &\log n & & &- &\cfrac{t}{n} &&\text{ if } 2s > t+1  \\
                (s-t)\cdot &\log n & & &- &\cfrac{t - (-1)^s}{n} &&\text{ if } 2s = t+1 \\
                (s-t)\cdot &\log n &+ &\log 2 &+ &\cfrac{s-2t}{2n}  &&\text{ if } 2s = t, s \text{ odd} \\
                (s-t-1)\cdot &\log n &+ &\log\abs{s} &\phantom{+} & &&\text{ if } 2s = t, s \text{ even} \\
                (-s)\cdot &\log n & & &+ &\cfrac{s-t-(-1)^s}{n} &&\text{ if } 2s = t-1 \\
                (-s)\cdot &\log n & & &+ &\cfrac{s-t}{n} &&\text{ if } 2s < t-1,
            \end{aligned}
            \right.
        \]
        and $\log\abs{ \alpha^{(1)} - \alpha^{(3)} }$ is either
        \[ 
            \left\{
            \begin{aligned}
                (s-t)\cdot &\log n & & &- &\cfrac{t}{n} &&\text{ if } s > 2t+1 \\
                (s-t)\cdot &\log n & & &- &\cfrac{t + (-1)^t}{n}  &&\text{ if } s = 2t+1 \\
                t\cdot &\log n &+ &\log 2 &+ &\cfrac{s-2t}{2n}  &&\text{ if } s = 2t, t \text{ even} \\
                (t-1)\cdot &\log n &+ &\log\abs{s+t} &\phantom{+} & &&\text{ if } s = 2t, t \text{ odd} \\
                t\cdot &\log n & & &+ &\cfrac{s-(-1)^t}{n}  &&\text{ if } s = 2t-1 \\
                t\cdot &\log n & & &+ &\cfrac{s}{n}  &&\text{ if } s < 2t-1.
            \end{aligned}
            \right.
        \]        
    \end{lem}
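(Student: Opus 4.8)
The plan is to deduce everything from the asymptotic expansions already recorded in Lemmas~\ref{lem: lapprox} and~\ref{lem: lpowers}. By Lemma~\ref{lem: lshuffle} we may write $\alpha^{(1)} = \lambda_0^s\lambda_1^t$, $\alpha^{(2)} = \lambda_1^s\lambda_2^t$ and $\alpha^{(3)} = \lambda_2^s\lambda_0^t$; since $\abs{\lambda_0}\sim n$, $\abs{\lambda_1}\sim n^{-1}$ and $\abs{\lambda_2}\sim 1$, Lemma~\ref{lem: lpowers} shows that each $\alpha^{(j)}$ is of the shape $\pm n^{e_j}(1+o(1))$ with $e_1 = s-t$, $e_2 = -s$, $e_3 = t$, and with sign $(-1)^t$, $(-1)^{s+t}$, $(-1)^s$ respectively. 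Hence whether $\log\abs{\alpha^{(1)}-\alpha^{(j)}}$ is governed by $\alpha^{(1)}$ or by $\alpha^{(j)}$ is decided by the sign and size of $e_1-e_2 = 2s-t$ for the pair $(1,2)$ and of $e_1-e_3 = s-2t$ for the pair $(1,3)$; this is the origin of the case distinctions in the statement.

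First I would dispose of the \emph{generic} cases, where the two exponents differ by at least $2$. There the larger of $\alpha^{(1)},\alpha^{(j)}$ dominates and the smaller one enters only through a factor $1-\alpha_{\mathrm{small}}/\alpha_{\mathrm{large}} = 1+\bigO{n^{-2}}$, so $\log\abs{\alpha^{(1)}-\alpha^{(j)}}$ equals the logarithm of the larger of $\abs{\alpha^{(1)}},\abs{\alpha^{(j)}}$ up to an error $\bigO{n^{-2}}$, which is within the claimed bound. That logarithm is computed from Lemma~\ref{lem: lapprox} through $\log\abs{\alpha^{(1)}} = s\log\abs{\lambda_0}+t\log\abs{\lambda_1}$ and its two analogues; since the terms of order $n^{-2}$ occurring there carry an extra factor $\abs{s}$ or $\abs{t}$, they fall into the error, and keeping the expansion only down to order $n^{-1}$ yields the stated shapes such as $(s-t)\log n-t/n$.

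In the \emph{borderline} cases, where the exponents differ by exactly $1$, the quotient $\alpha_{\mathrm{small}}/\alpha_{\mathrm{large}}$ is of exact order $n^{-1}$ and its leading coefficient --- a sign $\pm 1$ coming from the sign rule above --- can no longer be discarded; expanding $\log\abs{1-\alpha_{\mathrm{small}}/\alpha_{\mathrm{large}}}$ to first order supplies the extra summand $\pm(-1)^s/n$ or $\pm(-1)^t/n$. The cases that require the most care are the \emph{ties} $2s=t$ and $s=2t$: here $\abs{\alpha^{(1)}}$ and $\abs{\alpha^{(j)}}$ have the same order of magnitude, and by the sign rule their leading terms either reinforce each other --- giving the $+\log 2$ summand --- or cancel exactly, the alternative being decided by a parity condition on $s$ (respectively on $t$). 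In the cancelling subcase one has to push the expansions of $\alpha^{(1)}$ and $\alpha^{(j)}$ from Lemma~\ref{lem: lpowers} one order further; the surviving leading term then turns out to be a fixed nonzero multiple of $s+t$, which lowers the power of $n$ by one and produces the $\log\abs{s+t}$ summand.

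I expect the cancelling subcase to be the main obstacle. There one has to check that the coefficient of $n^{e-2}$ in $\alpha^{(1)}$ and in $\alpha^{(j)}$ is again linear in $s$ and $t$, so that it is not only the top term that cancels, and one has to carry Lemma~\ref{lem: lpowers} far enough --- or exploit the identity $\alpha^{(1)}\alpha^{(2)}\alpha^{(3)} = \N_{\KK/\QQ}(\lambda_0^s\lambda_1^t) = 1$ --- to be sure that the remaining error stays within the claimed order rather than being swamped by the now comparatively large surviving term, whose size is only of order $\abs{s+t}\,n^{e-1}$ and can be as small as $n^{e-1}$. Bookkeeping the signs uniformly over the sign combinations of $(s,t)$ and over the parities of $s$, $t$ and $s\pm t$ is the remaining, routine part, once the dominant-term reduction above is in place.
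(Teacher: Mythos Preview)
Your proposal is correct and follows essentially the same route as the paper. The paper's proof expands each $\alpha^{(i)}$ via Lemma~\ref{lem: lpowers} (after first rewriting $\alpha^{(1)}-\alpha^{(2)}$ as $\lambda_0^{s-t}\lambda_2^{-t}-\lambda_0^{-s}\lambda_2^{-s+t}$, i.e.\ passing to the basis $\lambda_0,\lambda_2$ instead of your $\lambda_0,\lambda_1$; this is an inessential change since $\lambda_0\lambda_1\lambda_2=1$), and then performs exactly the case split you describe on which summand dominates, working out only the generic case $2s>t+1$ explicitly and leaving the remaining cases to the reader. Your systematic separation into generic, borderline, and tie cases---and in particular your identification of the cancelling tie subcase as the point requiring one extra order in the expansion---is more detailed than what the paper spells out, but the underlying argument is the same.
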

    \begin{proof}
        We express each of the algebraic numbers $\alpha^{(1)}, \alpha^{(2)}$ and $\alpha^{(3)}$ as powers of $\lambda_0$ and $\lambda_2$. Lemma~\ref{lem: lpowers} and case-differentiation on which terms are of higher or equal order and taking the logarithms yields the result.

        For example, if we consider the difference $\abs{\alpha^{(1)}-\alpha^{(2)}}$, we can write this as
        \[
            \abs{ \lambda_0^{s-t} \lambda_2^{-t} - \lambda_0^{-s} \lambda_2^{-s+t} }
        \]
        using Lemma~\ref{lem: lshuffle}. By Lemma~\ref{lem: lpowers}, we can further write $\lambda_0^{s-t}\lambda_2^{-t}$ as
        \begin{align*}
            &(-1)^t \br{ n^{s-t} + 2(s-t)n^{s-t-2} + \bigO{n^{s-t-2-2\varepsilon}} } \br{ 1 - tn^{-1} + \bigO{n^{-1-2\varepsilon}} } \\
            = &(-1)^t \br{ n^{s-t} - t n^{s-t-1} + \bigO{n^{s-t-1-2\varepsilon}} }
        \end{align*}
        and $\lambda_0^{-s}\lambda_2^{-s+t}$ as
        \begin{align*}
            &(-1)^{-s+t} \br{ n^{-s} - 2s n^{-s-2} + \bigO{n^{-s-2-2\varepsilon}} } \br{ 1 - (s-t)n^{-1} + \bigO{n^{-1-2\varepsilon}} }. \\
            =&(-1)^{-s+t} \br{ n^{-s} - (s-t) n^{-s-1} + \bigO{ n^{-s-1-2\varepsilon} } }.
        \end{align*}

        Then we just have to distinguish which of the four explicit terms dominates the rest, e.g. if $2s > t+1$, then $n^{s-t}$ and $-tn^{s-t-1}$ dominate $n^{-s}$ and $-(s-t)n^{-s-1}$, and the latter two terms can even be shifted into the $\bigO{ n^{s-t-1-2\varepsilon} }$. Factoring $n^{s-t}$ and expanding the logarithm of $1 - tn^{-1} + \bigO{n^{-1-2\varepsilon}}$ finally gives
        \[
            \log\abs{ \alpha^{(1)} - \alpha^{(2)} } = (s-t) \log n - \frac{t}{n} + \bigO{n^{-1-2\varepsilon}}.
        \]
    \end{proof}

    \begin{lem}\label{lem: errorbound}
        Let $n \geq n_0$ be sufficiently large, then we have
        \[
            \abs{ \alpha^{(1)} - \alpha^{(2)} } \cdot \abs{ \alpha^{(1)} - \alpha^{(3)} } > \frac{2}{3}\, n^2
        \]
        for $(s, t) \not\in \set{ (-1, -1), (1, 1) }$. The bounds
        \begin{align*}
            \min\set{ \abs{ \alpha^{(1)} - \alpha^{(2)} }^2 \cdot \abs{ \alpha^{(1)} - \alpha^{(3)} }, \abs{ \alpha^{(1)} - \alpha^{(2)} } \cdot \abs{ \alpha^{(1)} - \alpha^{(3)} }^2 } &> \frac{2}{3}\, n, \\
            \max\set{ \abs{ \alpha^{(1)} - \alpha^{(2)} }^2 \cdot \abs{ \alpha^{(1)} - \alpha^{(3)} }, \abs{ \alpha^{(1)} - \alpha^{(2)} } \cdot \abs{ \alpha^{(1)} - \alpha^{(3)} }^2 } &> \frac{2}{3}\, n^2
        \end{align*}
        hold for all $(s, t)$ with $st \neq 0$.
    \end{lem}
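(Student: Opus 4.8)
I plan to read all three inequalities off Lemma~\ref{lem: logdiffapprox} by a case distinction on the position of $(s,t)$ relative to the lines $2s=t$ and $s=2t$. Write $L_2:=\log\abs{\alpha^{(1)}-\alpha^{(2)}}$ and $L_3:=\log\abs{\alpha^{(1)}-\alpha^{(3)}}$; the first assertion then reads $L_2+L_3>2\log n+\log\tfrac{2}{3}$, while the min and max assertions read $L_2+L_3+\min\set{L_2,L_3}>\log n+\log\tfrac{2}{3}$ and $L_2+L_3+\max\set{L_2,L_3}>2\log n+\log\tfrac{2}{3}$. Since $\abs s,\abs t=O(n^{1/2-\varepsilon})$, every term $c/n$ occurring in Lemma~\ref{lem: logdiffapprox} with $c$ linear in $s,t$ is $O(n^{-1/2-\varepsilon})=o(1)$; hence each of $L_2,L_3$ has the shape $k\log n+c_0+o(1)$ with $k\in\ZZ$ and $c_0\ge 0$ one of $0,\log 2,\log\abs s,\log\abs{s+t}$.

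Everything is governed by the integer $k$. Set $h_1=s-t$, $h_2=-s$, $h_3=t$; from $\alpha^{(1)}\alpha^{(2)}\alpha^{(3)}=(\lambda_0\lambda_1\lambda_2)^{s+t}=1$ we have $h_1+h_2+h_3=0$, and up to $o(1)$ these are the exponents of $n$ in $\abs{\alpha^{(1)}},\abs{\alpha^{(2)}},\abs{\alpha^{(3)}}$. Off the line $2s=t$ the coefficient of $\log n$ in $L_2$ is $\max(h_1,h_2)$, and off $s=2t$ the coefficient in $L_3$ is $\max(h_1,h_3)$, so that of $L_2+L_3$ is $\max(h_1,h_2)+\max(h_1,h_3)$. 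The core of the argument is the elementary inequality
\[
\max(h_1,h_2)+\max(h_1,h_3)\ge 2,
\]
valid for all $(s,t)$ with $st\ne 0$ outside a short explicit list of small lattice points; I would prove it by splitting into the four cases given by the signs of $h_1-h_2$ and $h_1-h_3$ and using $h_1+h_2+h_3=0$ — when $h_1\ge h_2$ and $h_1\ge h_3$ the left-hand side equals $2h_1$ with $h_1\ge 0$ and $h_1=0$ forces $s=t=0$; each of the three remaining cases reduces to $-h_i\ge 2$ for the relevant index $i$, which together with the defining constraints of the case leaves only finitely many lattice points. As the $c_0$ are nonnegative, a coefficient of $\log n$ in $L_2+L_3$ that is $\ge 2$ yields $L_2+L_3\ge 2\log n-o(1)>2\log n+\log\tfrac{2}{3}$ once $n\ge n_0$.

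It then remains to settle the borderline configurations: the finitely many points with $\max(h_1,h_2)+\max(h_1,h_3)=1$, and the points on the lines $2s=t$ and $s=2t$, where two of the $\abs{\alpha^{(i)}}$ have the same order in $n$ and the coefficient of $\log n$ can drop by one. There one keeps the compensating terms $\log\abs s$ or $\log\abs{s+t}$ — and the $\log 2$'s that occur when the relevant power of $\lambda_2$ is odd — supplied by Lemma~\ref{lem: logdiffapprox}, and checks the finitely many lines and points one by one, the gap between $\tfrac{2}{3}$ and $1$ in the statement leaving just enough room to absorb the $o(1)$ error and the stray additive constants. The min and max bounds emerge from the same computation, needing in addition only a crude lower bound for $\min\set{L_2,L_3}$, respectively $\max\set{L_2,L_3}$, again read off from the coefficients of $\log n$.

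The main obstacle is organisational rather than conceptual: six regimes for $L_2$ times six for $L_3$ give thirty-six combinations, and while most of them are either mutually incompatible or comfortably above the target, the genuinely degenerate ones — where two of $\abs{\alpha^{(1)}},\abs{\alpha^{(2)}},\abs{\alpha^{(3)}}$ have equal order in $n$ — must be treated individually, and it is precisely in these that the exceptional set and the constant $\tfrac{2}{3}$ are pinned down.
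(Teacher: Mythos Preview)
Your approach is essentially the same as the paper's: both read off the leading power of $n$ in each factor from Lemma~\ref{lem: logdiffapprox} and verify by case distinction that the resulting exponent in the product is at least $2$ (respectively $1$). Your organisation via $h_1=s-t$, $h_2=-s$, $h_3=t$ with $h_1+h_2+h_3=0$, together with the identification of the leading exponent of $\abs{\alpha^{(1)}-\alpha^{(i)}}$ as $\max(h_1,h_i)$, is a tidy way to unify the six-by-six table of regimes, whereas the paper simply works out one representative region explicitly and declares the remaining cases analogous.
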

    \begin{proof}
        The choice of the constant $\frac{2}{3}$ is somewhat arbitrary. Any constant $<1$ would be correct if $n$ is sufficiently large, and for our purposes we need a constant (explicitly) larger than $\frac{1}{2}$.
    
        The stated result follows from either Lemma~\ref{lem: lpowers} or Lemma~\ref{lem: logdiffapprox} and case-differentiation; in each case, the exponent of the leading term is at least $2$ and $1$, respectively. 
        
        We demonstrate the case where $2s \leq t-1$ and $s \geq 2t+1$. From Lemma~\ref{lem: logdiffapprox} we can see that the power of $n$ in $\abs{\alpha^{(1)} - \alpha^{(2)}}$ and $\abs{\alpha^{(1)} - \alpha^{(3)}}$ is $-s$ and $s-t$, respectively. So the power of $n$ in the product is thus $-t$, and in the case where $2s \leq t-1$ and $s \geq 2t+1$, we have $-t \geq 2$ unless $s = t = -1$.
        If instead we square one of the terms and then take the product, the powers of $n$ are $-s-t$ and $s-2t$, respectively. We also have that $-s-t \geq 2$ and $s-2t \geq 1$.
    \end{proof}

    \begin{lem}\label{lem: vbar}
        Let
        \[
            v_1 =
            \begin{vmatrix}
                \log\abs{ \alpha^{(1)} - \alpha^{(2)} } & \log\abs{ \lambda_2 } \\ \\
                \log\abs{ \alpha^{(1)} - \alpha^{(3)} } & \log\abs{ \lambda_0 }
            \end{vmatrix},
            \qquad
            v_2 =
            \begin{vmatrix}
                \log\abs{ \lambda_1 } & \log\abs{ \alpha^{(1)} - \alpha^{(2)} } \\ \\
                \log\abs{ \lambda_2 } & \log\abs{ \alpha^{(1)} - \alpha^{(3)} }
            \end{vmatrix},            
        \]
        then there exists an integer $b_0$ such that for $\Bar{v} = b_0 R - v_1 - v_2$, we have
        \[
            \Bar{v} \geq \cfrac{\log n}{n} + \bigO{ \frac{\log n}{ n^{1+2\varepsilon} } }, \;\; R - \Bar{v} = \bigOm{ \log n }.
        \]
    \end{lem}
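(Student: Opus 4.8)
The plan is to reduce everything to a congruence for $v_1+v_2$ modulo $R$. Expanding the two determinants along their first columns and adding yields
\[
    v_1+v_2 = \log\abs{\alpha^{(1)}-\alpha^{(2)}}\br{\log\abs{\lambda_0}-\log\abs{\lambda_2}} + \log\abs{\alpha^{(1)}-\alpha^{(3)}}\br{\log\abs{\lambda_1}-\log\abs{\lambda_2}}.
\]
The key algebraic observation is a Cramer-type identity: for a unit $\eta=\lambda_0^{a_1}\lambda_1^{a_2}$, write $\eta'$ for the conjugate coming from $\lambda_0\mapsto\lambda_1$ and $\eta''$ for the one from $\lambda_0\mapsto\lambda_2$; then substituting $\log\abs{\eta'}$ for $\log\abs{\alpha^{(1)}-\alpha^{(2)}}$ and $\log\abs{\eta''}$ for $\log\abs{\alpha^{(1)}-\alpha^{(3)}}$ turns $v_1$ into $a_1R$ and $v_2$ into $a_2R$. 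This is immediate from $\log\abs{\lambda_0}+\log\abs{\lambda_1}+\log\abs{\lambda_2}=0$ together with the fact that $R$ is exactly the $2\times2$ determinant in the $\log\abs{\lambda_i}$ displayed just before Lemma~\ref{lem: regulatorapprox}. Hence, writing $\log\abs{\alpha^{(1)}-\alpha^{(2)}}=\log\abs{\eta'}+d_1$ and $\log\abs{\alpha^{(1)}-\alpha^{(3)}}=\log\abs{\eta''}+d_2$ for a single unit $\eta$, one gets
\[
    v_1+v_2 = (a_1+a_2)R + d_1\br{\log\abs{\lambda_0}-\log\abs{\lambda_2}} + d_2\br{\log\abs{\lambda_1}-\log\abs{\lambda_2}}.
\]

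So I would choose $\eta$ by matching leading terms: reading off from Lemma~\ref{lem: logdiffapprox} the integer $k_1$ (respectively $k_2$) with $\log\abs{\alpha^{(1)}-\alpha^{(2)}}=k_1\log n+\bigO{1/n}$ (respectively $\log\abs{\alpha^{(1)}-\alpha^{(3)}}=k_2\log n+\bigO{1/n}$), set $a_1:=-k_1$ and $a_2:=k_2$, so that $\log\abs{\eta'}=k_1\log n+\bigO{1/n}$ and $\log\abs{\eta''}=k_2\log n+\bigO{1/n}$ by Lemma~\ref{lem: lapprox}. Then $d_1,d_2$ are $\bigO{1/n}$ of the explicit shape $\ell_i(s,t)/n+\bigO{n^{-1-2\varepsilon}}$ for integer linear forms $\ell_i$; since $\log\abs{\lambda_0}-\log\abs{\lambda_2}=\log n+\bigO{1/n}$ and $\log\abs{\lambda_1}-\log\abs{\lambda_2}=-\log n+\bigO{1/n}$, taking $b_0:=a_1+a_2$ gives
\[
    \Bar v=-d_1\br{\log\abs{\lambda_0}-\log\abs{\lambda_2}}-d_2\br{\log\abs{\lambda_1}-\log\abs{\lambda_2}}=\tfrac{\ell(s,t)}{n}\log n+\bigO{\tfrac{\log n}{n^{1+2\varepsilon}}}
\]
for a single integer linear form $\ell=\ell_2-\ell_1$ --- for instance $\ell(s,t)=3(s-t)$ in the principal case, where the case conditions $2s>t+1$ and $s>2t+1$ force $s-t\ge1$. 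In general one checks case by case that the standing hypotheses make $\ell(s,t)\ge1$, which is exactly the asserted lower bound; and since $\abs{d_i}=\bigO{n^{-1/2-\varepsilon}}$ we have $\Bar v=\bigO{n^{-1/2-\varepsilon}\log n}=o(R)$, so $R-\Bar v=\bigOm{(\log n)^2}=\bigOm{\log n}$ by Lemma~\ref{lem: regulatorapprox}.

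I expect the real effort to be the case bookkeeping rather than any single hard estimate. Lemma~\ref{lem: logdiffapprox} breaks each of $\log\abs{\alpha^{(1)}-\alpha^{(2)}}$ and $\log\abs{\alpha^{(1)}-\alpha^{(3)}}$ into six cases; the linear symmetries $(s,t)\mapsto(-s+t,-s)$ and $(s,t)\mapsto(-t,s-t)$ from Lemma~\ref{lem: onlytype1} permute $\alpha^{(1)},\alpha^{(2)},\alpha^{(3)}$ and hence the regions of the $(s,t)$-plane, which cuts the work down to a fundamental sector, but one still has to compute $\ell$ and verify $\ell(s,t)\ge1$ in each surviving combination. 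The one genuinely different subtlety is the thin cases where Lemma~\ref{lem: logdiffapprox} carries an additive $\log\abs{s}$- or $\log\abs{s+t}$-term and the matching exponent of $\eta$ shifts by one: there $d_1$ (say) is of order $\log n$ rather than $1/n$, so $b_0=a_1+a_2$ would make $\Bar v$ negative; one repairs this by taking $b_0=a_1+a_2+1$, after which $\Bar v=R-\log\abs{s}\log n+(\text{lower order})$, and the term $\log\abs{s}\log n\ge(\log 6)\log n$ simultaneously keeps $\Bar v$ positive (indeed $\gg\log n/n$) and furnishes the gap $R-\Bar v=\bigOm{\log n}$.
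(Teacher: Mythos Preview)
Your approach is the paper's, repackaged: both expand $v_1+v_2$ via the $\log|\lambda_i|$ asymptotics, feed in Lemma~\ref{lem: logdiffapprox} case by case, and pick $b_0$ to kill the $(\log n)^2$ term; your Cramer identity is a pleasant conceptual gloss on why that leading coefficient is an integer. Three things need repair, however.

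First, a sign: the $2\times2$ determinant displayed just before Lemma~\ref{lem: regulatorapprox} equals $\log|\lambda_1|\log|\lambda_0|-(\log|\lambda_2|)^2\sim-(\log n)^2<0$, whereas $R$ is its absolute value. So your identity should read $v_1\mapsto -a_1R$, and the correct choice is $b_0=-(a_1+a_2)$; this only flips the sign of $b_0$ and does not affect the bounds on $\bar v$. Second, the symmetries of Lemma~\ref{lem: onlytype1} do \emph{not} cut the casework here: those maps permute the $\alpha^{(i)}$, but $v_1,v_2$ are pinned to $\alpha^{(1)}$ and to fixed $\lambda$-columns, so under $(s,t)\mapsto(-s+t,-s)$ the new $v_1$ involves $|\alpha^{(3)}-\alpha^{(2)}|$, a quantity not appearing in the original; the paper simply checks all regions directly. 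Third, your thin-case discussion is incomplete. On each of the lines $2s=t$ and $s=2t$, Lemma~\ref{lem: logdiffapprox} has \emph{two} subcases, one with an additive $\log 2$ (no exponent shift) and one with $\log|s|$ or $\log|s+t|$ (exponent shifted by one), and all four produce a $d_i$ of order $1$ rather than $1/n$; your description covers only the shifted ones, and the claimed bound $\log|s|\ge\log 6$ fails already for $s=\pm2$. The paper's uniform fix is: writing $-v_1-v_2=-l(\log n)^2+\xi\log n+\ldots$, one takes $b_0=l$ on $s=2t$ (where $\xi\in\{\log 2,\log|s+t|\}>0$, giving $\bar v=\xi\log n$ and $R-\bar v\sim(\log n)^2$) and $b_0=l+1$ on $2s=t$ (where $\xi\in\{-\log 2,-\log|s|\}<0$, giving $\bar v\sim(\log n)^2+\xi\log n$ and $R-\bar v=-\xi\log n$); the bound $|\xi|\le(\tfrac12-\varepsilon)\log n+O(1)$ coming from $|s|,|t|\le n^{1/2-\varepsilon}$ is what secures both inequalities.
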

    \begin{proof}
        By Lemma~\ref{lem: lapprox}, we have
        \begin{align}\label{eq: v1v2}
            -v_1 - v_2 = \;\;\; &\log\abs{ \alpha^{(1)} - \alpha^{(2)} } \br{ -\log n + \frac{1}{n} + \bigO{ n^{-2} } } + \\
            &\log\abs{ \alpha^{(1)} - \alpha^{(3)} } \br{ \log n + \frac{2}{n} + \bigO{n^{-2}} }, \nonumber
        \end{align}
        and we use Lemma~\ref{lem: logdiffapprox} to verify the statement in each case. 
        
        We start with the special cases $2s=t$ and $s=2t$, then $-v_1 - v_2$ is of the form
        \[
            -v_1 - v_2 = -l\, \br{ \log n }^2 + \xi \log n + \bigO{ \frac{\log n}{ n^{1/2 + \varepsilon} } },
        \]
        where $ \xi \in \set{ -\log 2, -\log\abs{s} }$ and $\xi \in \set{\log 2, \log\abs{s+t}}$, respectively, and $l$ is some linear combination of $1, s, t$ the precise shape of which we are not interested in. If $2s =t$, then taking $b_0 = l+1$ gives
        \[
            \Bar{v} = c_0\, R - v_1 - v_2 = \br{ \log n }^2 + \xi \log n + \bigO{ \frac{\log n}{n^{1+2\varepsilon}} },
        \]
        and since $\xi \in \set{-\log 2, - \log\abs{s}}$ and $\abs{s} \leq n^{\frac{1}{2}-\varepsilon}$, we have 
        \[
            \xi \geq -\log\abs{ n^{ \frac{1}{2} - \varepsilon } } + \bigO{1} = \br{ - \frac{1}{2} + \varepsilon } \log n + \bigO{1}.
        \]
        
        The term $\br{\log n}^2$ in $\Bar{v}$ can thus at worst be $\br{ \frac{1}{2} + \varepsilon} \br{\log n}^2$, when taking $\xi \log n$ into consideration, i.e.
        \[
            \Bar{v} = \bigOm{ \br{\log n}^2 },
        \]
        and by Lemma~\ref{lem: regulatorapprox}, we have
        \[
            R - \Bar{v} = -\xi \log n + \bigO{ \frac{\log n}{n^{1/2 + \varepsilon}} } = \bigOm{ \log n }, 
        \]
        as required.

        If $s = 2t$, we instead take $b_0 = l$, then $\Bar{v} = \xi \log n + \bigO{ \frac{ \log n }{ n^{ 1/2 + \varepsilon } } } = \bigOm{ \log n }$, since $\xi > 0$. And we also have $ R - \Bar{v} = \br{ \log n }^2 - \xi \log n + \bigO{ \frac{\log n}{ n^{ 1/2 + \varepsilon } } } = \bigOm{ \br{\log n}^2 } $, which is even better than what we require.

        In the other cases, we do not have the $\xi \log n$ term and have to pay more attention to the other expressions; we will demonstrate the case where both $2s \leq t-1$ and $s \geq 2t+1$ hold, other cases can be proved similarly.
        
        If we let $\mathbb{1}_a(b)$ be $1$ if $a = b$ and $0$ otherwise, then by Lemma~\ref{lem: logdiffapprox}, we have that
        \begin{alignat*}{5}
            \log\abs{ \alpha^{(1)} - \alpha^{(2)} } &= \phantom{(t}(-s) &\cdot \log n &+ \cfrac{ s - t - (-1)^s \mathbb{1}_{2s}(t-1) }{n} &&+ \bigO{n^{-1-2\varepsilon}}, \\
            \log\abs{ \alpha^{(1)} - \alpha^{(3)} } &= (s-t) &\cdot \log n &- \cfrac{ t + (-1)^t \mathbb{1}_{s}(2t+1) }{n} &&+ \bigO{n^{-1-2\varepsilon}}.
        \end{alignat*}

        We plug this into Equation~\eqref{eq: v1v2} and get
        \[
            -v_1 - v_2 = \br{2s-t} \br{ \log n }^2 + \br{ -2t + \xi' } \cfrac{\log n}{n} + \bigO{ \frac{\log n}{n^{1+2\varepsilon}} },
        \]
        where $\xi' \in \set{0, \pm 1, \pm 2}$, depending on which of the indicators $\mathbb{1}_{2s}(t-1)$ and $\mathbb{1}_{s}(2t+1)$ is non-zero, if any is at all; we only care that $\xi' \geq -2$.

        Putting $b_0 = -2s+t$ then gives
        \[
            \Bar{v} = \br{-2s+t} R - v_1 - v_2 = \br{ -2s - t + \xi' } \cfrac{\log n}{n} + \bigO{ \frac{\log n}{n^{1+2\varepsilon}} }.
        \]

        The system of linear inequalities
        \begin{align*}
            2s &\leq t-1 \\
            s &\geq 2t + 1 \\
            -2s-t -2 &\leq 0
        \end{align*}
        has no solutions, and thus indeed $-2s-t+\xi' \geq 1$, or $\Bar{v} \geq \cfrac{\log n}{n} + \bigO{ \frac{\log n}{n^{1+2\varepsilon}} } $. Subtracting it from the regulator also gives $R-\Bar{v} = \bigOm{ \br{\log n}^2 }$, which is even more than required.
    \end{proof}

%
%
\section{Proof of Theorem~\ref{thm: main}}
    
    We assume that for sufficiently large $n$ there exists a solution $(x,y)$ with $\abs{y} \geq 2$ of the Thue equation $f_{n,s,t}(x,y) = \pm 1$ and we want to derive a contradiction. We recall that we denote by $\alpha^{(1)}, \alpha^{(2)}, \alpha^{(3)}$ the algebraic elements in the norm-form in the order in which they appear in the theorem, and write $\beta^{(i)} = x - \alpha^{(i)} y$. A solution $(x,y)$ of the Thue equation $f_{n, s, t}(x, y) = \pm 1$ is then of type $j$, if
    \[
        \abs{ \beta^{(j)} } = \min\set{ \abs{\beta^{(1)}}, \abs{\beta^{(2)}}, \abs{\beta^{(3)}} }.
    \]
    
    By Lemma~\ref{lem: onlytype1}, we only have to consider $j = 1$. In this case, we have
    \[
        2\abs{ x - \alpha^{(i)} y } \geq \abs{ x - \alpha^{(i)} y } + \abs{ x - \alpha^{(1)} y } \geq \abs{ y \br{ \alpha^{(1)} - \alpha^{(i)} } },
    \]
    for $i \in \set{2, 3}$. Applying the inequality to $\abs{ f_{n, s, t}(x, y) } = 1$ yields
    \begin{equation}\label{eq: betajbound}
        \abs{ \beta^{(1)} } \leq \cfrac{ 4 }{ \abs{y}^2 \abs{ \alpha^{(1)} - \alpha^{(2)} } \abs{\alpha^{(1)} - \alpha^{(3)}} } \leq \cfrac{1}{ \abs{ \alpha^{(1)} - \alpha^{(2)} } \abs{\alpha^{(1)} - \alpha^{(3)}} }.
    \end{equation}
    
    We express $\log\abs{\beta^{(2)}}$ and $\log\abs{\beta^{(3)}}$ as
    \begin{align*}
        \log\abs{ \beta^{(i)} } &= \log\abs{ x - \alpha^{(1)} y + y\br{ \alpha^{(1)} - \alpha^{(i)} } } \\
        &= \log\abs{y} + \log\abs{ \alpha^{(1)} - \alpha^{(i)} } + \log\abs{ 1 + \cfrac{ \beta^{(1)} }{ y\br{ \alpha^{(1)} - \alpha^{(i)} } } },
    \end{align*}
    and apply both Inequality~\eqref{eq: betajbound} and Lemma~\ref{lem: errorbound} on the argument of the last logarithm. This gives
    \[
        \abs{ \cfrac{ \beta^{(1)} }{ y\br{ \alpha^{(1)} - \alpha^{(i)} } } } < \cfrac{1}{n},
    \]
    which is less than $1$. And since we have $\log(1+a) = a + \bigO{a^2}$ for $\abs{a} < 1$, this implies
    \begin{equation}\label{eq: betaiexpr}
        \log\abs{ \beta^{(i)} } = \log\abs{y} + \log\abs{ \alpha^{(1)} - \alpha^{(i)} } + \cfrac{ \beta^{(1)} }{ y\br{ \alpha^{(1)} - \alpha^{(i)} } } + \bigO{ n^{-2} }.
    \end{equation}
    
    Since $\set{\lambda_0, \lambda_1}$ is a fundamental system of units in $\KK$, there exist $b_1, b_2 \in \ZZ$ such that $\beta^{(1)} = \pm \lambda_0^{b_1} \lambda_1^{b_2}$. By conjugating, we get that
    \begin{align*}
        \log\abs{ \beta^{(2)} } &= b_1 \log\abs{ \lambda_1 } + b_2 \log\abs{ \lambda_2 }, \\
        \log\abs{ \beta^{(3)} } &= b_1 \log\abs{ \lambda_2 } + b_2 \log\abs{ \lambda_0 }.
    \end{align*}

    To this system of linear equations in $b_1$ and $b_2$, we want to apply Cramer's rule and use Equation~\eqref{eq: betaiexpr} to express $\log\abs{\beta^{(2)}}$ and $\log\abs{\beta^{(3)}}$. We have already given an asymptotic for the determinant of the matrix in this system, which is precisely the regulator $R$ (up to sign), in Lemma~\ref{lem: regulatorapprox}.
    
    Cramer's rule then gives
    \begin{align*}
        R\, b_1 &= u_1 \log\abs{y} + v_1 + \cfrac{ \beta^{(1)} }{ y }\, w_1 + \bigO{ \frac{ \log n }{ n^2 } }, \\
        R\, b_2 &= u_2 \log\abs{y} + v_2 + \cfrac{ \beta^{(1)} }{ y }\, w_2 + \bigO{ \frac{ \log n }{ n^2 } },
    \end{align*}
    with
    \begin{alignat*}{3}
        u_1 &=
        \begin{vmatrix}
            1 & \log\abs{ \lambda_2 } \\ \\
            1 & \log\abs{ \lambda_0 }
        \end{vmatrix},
        \;
        v_1 &=
        \begin{vmatrix}
            \log\abs{ \alpha^{(1)} - \alpha^{(2)} } & \log\abs{ \lambda_2 } \\ \\
            \log\abs{ \alpha^{(1)} - \alpha^{(3)} } & \log\abs{ \lambda_0 }
        \end{vmatrix},
        \;
        w_1 &=
        \begin{vmatrix}
            \cfrac{1}{ \alpha^{(1)} - \alpha^{(2)} } & \log\abs{ \lambda_2 } \\
            \cfrac{1}{ \alpha^{(1)} - \alpha^{(3)} } & \log\abs{ \lambda_0 }
        \end{vmatrix},
        \\
        u_2 &=
        \begin{vmatrix}
            \log\abs{ \lambda_1 } & 1 \\ \\
            \log\abs{ \lambda_2 } & 1
        \end{vmatrix},
        \;
        v_2 &=
        \begin{vmatrix}
            \log\abs{ \lambda_1 } & \log\abs{ \alpha^{(1)} - \alpha^{(2)} } \\ \\
            \log\abs{ \lambda_2 } & \log\abs{ \alpha^{(1)} - \alpha^{(3)} }
        \end{vmatrix},
        \;
        w_2 &=
        \begin{vmatrix}
            \log\abs{ \lambda_1 } & \cfrac{1}{ \alpha^{(1)} - \alpha^{(2)} } \\
            \log\abs{ \lambda_2 } & \cfrac{1}{ \alpha^{(1)} - \alpha^{(3)} }
        \end{vmatrix}.
    \end{alignat*}

    By Lemma~\ref{lem: lapprox}, the linear combination $\Bar{u} = -u_1-u_2$ is a small positive number, namely
    \begin{equation}\label{eq: ubar}
        \Bar{u} = - \br{ \log n - \frac{1}{n} + \bigO{n^{-2}} } - \br{ -\log n - \frac{2}{n} + \bigO{n^{-2}} } = \frac{3}{n} + \bigO{n^{-2}}.
    \end{equation}

    We take the same linear combination $\Bar{w} = -w_1 - w_2$, and for the integer $b_0$ from Lemma~\ref{lem: vbar}, we define $\Bar{v} = b_0 R - v_1 - v_2$ and $\Bar{b} = b_0 - b_1 - b_2$. With these choices we still have
    \begin{equation}\label{eq: cramer}
        R\, \Bar{b} = \Bar{u}\, \log\abs{y} + \Bar{v} + \frac{ \beta^{(1)} }{y}\, \Bar{w} + \bigO{ \frac{ \log n }{ n^2 } }.
    \end{equation}

    By Inequality~\eqref{eq: betajbound} and $\abs{y} \geq 2$, we have $\abs{ \cfrac{ \beta^{(1)} }{ y }\, \Bar{w} } \leq \abs{ \cfrac{ \Bar{w} }{2 \abs{ \alpha^{(1)} - \alpha^{(2)} } \abs{ \alpha^{(1)} - \alpha^{(3)} } } }$. First, we use Lemma~\ref{lem: lapprox} to write $\Bar{w}$ as
    \begin{align*}
        \Bar{w} &=\cfrac{1}{ \alpha^{(1)} - \alpha^{(2)} } \br{ -\log\abs{\lambda_0} + \log\abs{\lambda_2} } + \cfrac{1}{ \alpha^{(1)} - \alpha^{(3)} } \br{ -\log\abs{\lambda_1} +  \log\abs{\lambda_2} } \\
        &= - \cfrac{ \log n }{ \alpha^{(1)} - \alpha^{(2)} } + \cfrac{ \log n }{ \alpha^{(1)} - \alpha^{(3)} } + \bigO{ \frac{1}{n \min\set{ \abs{ \alpha^{(1)} - \alpha^{(2)} }, \abs{ \alpha^{(1)} - \alpha^{(2)} } } }}.
    \end{align*}

    We plug this into the inequality
    \[
        \abs{ \cfrac{ \beta^{(1)} }{ y }\, \Bar{w} } \leq \abs{ \cfrac{ \Bar{w} }{2 \abs{ \alpha^{(1)} - \alpha^{(2)} } \abs{ \alpha^{(1)} - \alpha^{(3)} } } }
    \]
    and apply Lemma~\ref{lem: errorbound}, which gives
    \begin{equation}\label{eq: wbar}
        \abs{ \cfrac{ \beta^{(1)} }{ y }\, \Bar{w} } < \frac{3}{4}\cfrac{\log n}{n} + \bigO{ \frac{ \log n }{ n^2 } };
    \end{equation}
    the constant in Lemma~\ref{lem: errorbound} ensures that the constant $\frac{3}{4}$ of $\frac{\log n}{n}$ is smaller than $1$, which allows the next arguments.
    
    Using this bound in conjunction with the asymptotic for $\Bar{u}$ of Equation~\eqref{eq: ubar}, we can thus derive from Equation~\eqref{eq: cramer} that
    \[
        R\, \Bar{b} > \cfrac{3 \log\abs{y}}{n} + \Bar{v} - \frac{3}{4}\cfrac{\log n}{n} + \bigO{ \frac{\log n}{n^2} },
    \]
    and since by Lemma~\ref{lem: vbar}, we have $\Bar{v} \geq \frac{\log n}{n} + \bigO{ \frac{\log n}{n^{1+2\varepsilon}} }$, this implies that
    \[
        R\, \bar{b} > \frac{3 \log\abs{y}}{n} + \br{1 - \frac{3}{4}}\frac{\log n}{n} + \bigO{ \frac{\log n}{n^2} } > 0.
    \]
    
    Since $\Bar{b}$ is an integer, it must be at least $1$. Equations~\eqref{eq: ubar} and \eqref{eq: cramer} thus yield
    \[
        R - \Bar{v} - \cfrac{ \beta^{(1)} }{ y }\, \Bar{w} \leq \cfrac{3 \log\abs{y}}{n} + \bigO{ \frac{\log n}{n^2} },
    \]
    and the left-hand side, by Inequality~\eqref{eq: wbar} and Lemma~\ref{lem: vbar} is of order at least $\bigOm{ \log n }$. We conclude that
    \begin{equation}\label{eq: ylower}
        \log\abs{y} = \bigOm{ n \, \log n }.
    \end{equation}
    
    This gives a contradiction to Theorem~\ref{thm:bugy}: We have effectively bounded the regulator $R$ of $\KK_n$ in Lemma~\eqref{lem: regulatorapprox} by $(\log n)^2$ and the unit rank is $r = 2$. Further, by Lemma~\ref{lem: lapprox}, an effective upper bound to the coefficients of $f_{n, s, t}$ is given by $n^{2 \max\set{\abs{s}, \abs{t}}}$. If we plug everything into Theorem~\ref{thm:bugy} and take the logarithm, we get 
    \begin{align*}
        \log\abs{y} &= \bigO{ \br{ \log n }^2 \log\log n \br{ \br{\log n}^2 + \max\set{\abs{s}, \abs{t}} \log n } } \\
        &= \bigO{ n^{\frac{1}{2} - \varepsilon} \br{ \log n }^3 \log\log n },
    \end{align*}
    which contradicts Equation~\eqref{eq: ylower}. So provided that $n \geq n_0$ is sufficiently large, the assumption of a solution with $\abs{y} \geq 2$ is false; we thus conclude the proof of Theorem~\ref{thm: main}.

    \bibliography{references}

\end{document}